\definecolor{darkred}{rgb}{0.7,0,0}
\definecolor{darkblue}{rgb}{0,0,0.7}
\patchcmd{\section}{\scshape}{\bfseries}{}{}
\renewcommand{\@secnumfont}{\bfseries}
\newtheoremstyle{standard}{9pt}{9pt}{\itshape}{}{\bfseries}{.}{.5em}{}
\theoremstyle{standard}
\newtheorem{lemma}{Lemma}[section]
\newtheorem{thm}[lemma]{Theorem}
\newtheorem{prop}[lemma]{Proposition}
\newtheoremstyle{definition}{9pt}{9pt}{}{}{\bfseries}{.}{.5em}{}
\theoremstyle{definition}
\newtheorem{defi}[lemma]{Definition}
\newtheorem{rem}[lemma]{Remark}
\crefname{lemma}{Lemma}{Lemmas}
\crefname{thm}{Theorem}{Theorems}
\crefname{prop}{Proposition}{Propositions}
\crefname{cor}{Corollary}{Corollaries}
\crefname{thmintro}{Theorem}{Theorems}
\crefname{propintro}{Proposition}{Propositions}
\crefname{defi}{Definition}{Definitions}
\crefname{rem}{Remark}{Remarks}
\crefname{ex}{Example}{Examples}
\crefname{section}{Section}{Sections}
\DeclareMathOperator{\Hom}{Hom}
\DeclareMathOperator{\Aut}{Aut}
\DeclareMathOperator{\im}{im}
\DeclareMathOperator{\ord}{ord}
\DeclareMathOperator{\ev}{ev}
\newcommand{\alg}[1]{\overline{#1}}
\begin{document}


\begin{abstract}
We present a simple proof of the fundamental theorem of Galois theory, which establishes a correspondence between the intermediate fields of a finite Galois extension and the subgroups of its Galois group. The proof is based on the combinatorial fact that a field cannot be expressed as the union of finitely many proper subfields.
\end{abstract}


\title[A simple proof of the fundamental theorem of Galois theory]{A simple proof of the fundamental\\theorem of Galois theory}
\author{Martin Brandenburg}
\thanks{\emph{E-mail:} \texttt{brandenburg@uni-muenster.de}}
\date{\today}

\maketitle


\section{Introduction}

The fundamental theorem of finite Galois theory states that for every finite Galois, i.e.\ normal and separable extension $L/K$ the maps $E \mapsto \Aut_E(L)$ and $L^H \mapsfrom H$ establish a bijection
\[\bigl\{\text{intermediate fields of } L/K\bigr\} \cong \bigl\{\text{subgroups of } \Aut_K(L)\bigr\}.\]
This theorem can be separated into two parts:
\begin{enumerate}
\item For every intermediate field $E$ of $L/K$ the trivial inclusion ${E \subseteq L^{\Aut_E(L)}}$ is an equality.
\item For every subgroup $H$ of $\Aut_K(L)$ the trivial inclusion $H \subseteq \Aut_{L^H}(L)$ is an equality.
\end{enumerate}
There are several standard proofs which can be found in textbooks on algebra, for example \cite[Thm.\ 16]{Artin}, \cite[Thm.\ V.§33.4]{Bourbaki}, \cite[Thm.\ 14.14]{Dummit}, \cite[Thm.\ VII.6.9]{Aluffi}. This note is the result of an attempt to prove (1) and (2) as directly as possible from the definitions. In particular, we will not prove (1) and (2) by comparing the degrees resp.\ orders of both sides. We will not use the linear independence of characters either. Splitting fields are not even mentioned once.

Instead, (1) will be derived rather directly from the definitions and basic facts about algebraic extensions, whereas (2) will be derived rather easily from a combinatorial result, namely that a field cannot be written as a union of finitely many proper subfields. The same result has been used by Geck \cite{Geck} to give a very short proof of the well-known equivalent characterizations of Galois extensions, which in turn leads to a short proof of the fundamental theorem. Other quick proofs have been found by DeMeyer \cite{DeMeyer} and Dress \cite{Dress}. Dress's approach is very conceptual as it uses general facts about group actions on sets and vector spaces.
 
This note does not assume prior knowledge about Galois theory. We only assume basic field theory and develop in detail all the ingredients here which are necessary to state and prove the fundamental theorem of Galois theory. As such, this note can also be used as an introduction to Galois theory. We will, however, omit most results which are not necessary for the theorem.

\emph{Acknowledgments.} I would like to thank Peter Müller for pointing me to Geck's paper \cite{Geck}.


\section{Preliminaries}

For basic field theory we refer to \cite[sect.\ VII.1, VI.2.1]{Aluffi}. A field extension is, by (modern) definition, a homomorphism of fields. It is automatically injective. We assume the notions of finite and algebraic extensions and their homomorphisms, the degree of a field extension, the multiplicativity formula for degrees, minimal polynomials as well as algebraic closures. For a field extension $L/K$ and an algebraic element $a \in L$ we have $K(a) = K[a] \cong K[T] / \langle f \rangle$, where $f \in K[T]$ is the minimal polynomial of $a$ over $K$. This is all we need from field theory.

For extensions $L/K$, $M/K$ we denote by $\Hom_K(L,M)$ the set of $K$-homomorphisms $L \to M$. We denote by $\Aut_K(L)$ the group of $K$-automorphisms of $L$.

\begin{lemma} \label{univ}
Let $L/K$, $M/K$ be extensions. Let $a \in L$ be algebraic over $K$ with minimal polynomial $f \in K[T]$. Then we have a bijection
\[\Hom_K(K(a),M) \cong \{m \in M : f(m) = 0\}\]
given by $\sigma \mapsto \sigma(a)$. In particular, $\Hom_K(K(a),M)$ has at most $\deg(f) = [K(a):K]$ elements.
\end{lemma}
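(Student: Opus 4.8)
The plan is to establish the bijection by using the concrete description of $K(a)$ as a quotient of a polynomial ring, which was recalled in the Preliminaries. Since $a$ is algebraic over $K$ with minimal polynomial $f$, we have the isomorphism $K(a) \cong K[T]/\langle f \rangle$ sending the class of $T$ to $a$. The universal property of polynomial rings and quotient rings will then translate $K$-homomorphisms out of $K(a)$ into data about where the root $a$ can go.

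First I would describe the map explicitly. Given a $K$-homomorphism $\sigma \colon K(a) \to M$, I would show that $\sigma(a)$ is a root of $f$ in $M$: applying $\sigma$ to the equation $f(a) = 0$, and using that $\sigma$ fixes $K$ pointwise (hence fixes the coefficients of $f$), one gets $f(\sigma(a)) = 0$. This shows the assignment $\sigma \mapsto \sigma(a)$ indeed lands in the set $\{m \in M : f(m) = 0\}$.

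Next I would construct the inverse. Given a root $m \in M$ of $f$, the universal property of the polynomial ring $K[T]$ yields a unique $K$-algebra homomorphism $K[T] \to M$ sending $T \mapsto m$; since $f(m) = 0$, this map kills the ideal $\langle f \rangle$ and therefore factors uniquely through $K[T]/\langle f \rangle \cong K(a)$, giving a $K$-homomorphism $\sigma \colon K(a) \to M$ with $\sigma(a) = m$. I would then verify that these two assignments are mutually inverse: a $K$-homomorphism out of $K(a)$ is determined by its value on $a$ because $K(a) = K[a]$ is generated over $K$ by $a$, so two $K$-homomorphisms agreeing on $a$ are equal.

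The main obstacle here is really bookkeeping rather than a genuine difficulty: the key point is to use $K(a) = K[a] \cong K[T]/\langle f \rangle$ cleanly so that both directions of the bijection are immediate from the universal property, and to note carefully that $\sigma$ fixing $K$ is what forces $\sigma(f(a)) = f(\sigma(a))$. The final count then follows since $f$ has at most $\deg(f)$ roots in $M$, and $\deg(f) = [K(a):K]$ by the recalled fact that $K(a) \cong K[T]/\langle f \rangle$ has dimension $\deg(f)$ as a $K$-vector space.
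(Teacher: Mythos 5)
Your proposal is correct and follows essentially the same route as the paper: both rest on the identification $K(a) \cong K[T]/\langle f \rangle$ together with the universal properties of polynomial and quotient algebras, the only difference being that the paper compresses this into a single chain of natural bijections while you unpack the forward map, the inverse, and their compatibility explicitly. The final counting step (at most $\deg(f)$ roots in $M$, and $\deg(f) = [K(a):K]$) matches as well.
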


\begin{proof}
This follows from $K(a) \cong K[T]/\langle f \rangle$ as well as the universal properties of quotient algebras and polynomial algebras:
\begin{align*}
\Hom_K(K(a),M) & \cong \Hom_K(K[T]/\langle f \rangle,M) \\
& \cong \{\sigma \in \Hom_K(K[T],M) : \sigma(f) = 0\} \\
& \cong \{m \in M : \ev_m(f) = 0\} \quad (\text{where } \ev_m : K[T] \to M,\, T \mapsto m) \\
& = \{m \in M : f(m) = 0\} \qedhere
\end{align*}
\end{proof}

\begin{lemma} \label{atmost}
Let $L/K$ be a finite extension. Let $M/K$ be any extension. Then $\Hom_K(L,M)$ has at most $[L:K]$ elements.
\end{lemma}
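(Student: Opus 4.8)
The plan is to argue by induction on the degree $[L:K]$, reducing the general case to the simple extensions already handled in Lemma \ref{univ} by splitting off one generator at a time and invoking the multiplicativity of degrees.

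For the base case $[L:K] = 1$ we have $L = K$, and $\Hom_K(K,M)$ consists of the single structural map $K \to M$, so the bound holds. Suppose now $[L:K] > 1$ and that the claim is known for all finite extensions of strictly smaller degree, over an arbitrary base field. Since $L \neq K$ we may choose an element $a \in L \setminus K$, which gives a tower $K \subsetneq K(a) \subseteq L$ with $[K(a):K] > 1$, and hence $[L:K(a)] < [L:K]$ by multiplicativity.

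Next I would study the restriction map $r : \Hom_K(L,M) \to \Hom_K(K(a),M)$, $\sigma \mapsto \sigma|_{K(a)}$, and compute $|\Hom_K(L,M)|$ by summing over its fibers. The crucial observation is that for a fixed $\tau \in \Hom_K(K(a),M)$ the fiber $r^{-1}(\tau)$ is the set of $\sigma \in \Hom_K(L,M)$ extending $\tau$, and these are precisely the $K(a)$-homomorphisms $L \to M$ once $M$ is regarded as an extension of $K(a)$ via $\tau$. Here we use that $\tau$, being a field homomorphism, is injective, so it genuinely makes $M$ into an extension $M/K(a)$. Thus $r^{-1}(\tau)$ is in bijection with $\Hom_{K(a)}(L,M)$ for this transported structure, and the induction hypothesis, applied with base field $K(a)$, yields $|r^{-1}(\tau)| \le [L:K(a)]$.

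Finally I would combine the two estimates. Lemma \ref{univ} bounds $|\Hom_K(K(a),M)| \le [K(a):K]$, and summing the fiber estimate over the image of $r$ gives
\[ |\Hom_K(L,M)| = \sum_{\tau \in \im(r)} |r^{-1}(\tau)| \le [K(a):K]\cdot[L:K(a)] = [L:K], \]
the last equality being multiplicativity of degrees. The one step I expect to need the most care is the fiber identification: making precise that extending a homomorphism along $K(a) \subseteq L$ is the same as prescribing a $K(a)$-homomorphism for the algebra structure transported through $\tau$, so that the induction hypothesis applies with base field $K(a)$ rather than $K$. Everything else is routine bookkeeping.
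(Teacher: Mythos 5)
Your proposal is correct and follows essentially the same argument as the paper: induction on $[L:K]$, splitting off a simple extension $K(a)$, bounding the restrictions via Lemma \ref{univ} and the extensions of each restriction via the induction hypothesis with base field $K(a)$. Your careful treatment of the fiber identification (transporting the extension structure on $M$ through $\tau$) just makes explicit what the paper's phrase ``each extension $K(a) \to M$ has at most $[L:K(a)]$ extensions to $L$'' leaves implicit.
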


\begin{proof}
We need to show that the given homomorphism $K \to M$ admits at most $[L:K]$ extensions to $L$. We proceed by induction on $[L:K]$. The case $[L:K]=1$ is easy. Now assume $[L:K] > 1$ and pick some $a \in L \setminus K$. Because of \cref{univ} there are at most $[K(a):K]$ extensions of $K \to M$ to $K(a)$. We have $[L:K(a)] < [L:K]$. By induction hypothesis, each extension $K(a) \to M$ has at most $[L:K(a)]$ extensions to $L$. Therefore, there are at most $[L:K(a)] \cdot [K(a):K] = [L:K]$ extensions of $K \to M$ to $L$.
\end{proof}

\begin{lemma}\label{extend}
Let $L/K$ be an algebraic extension. Let $M$ be an algebraically closed field. Then every homomorphism $\sigma : K \to M$ admits an extension $\tau : L \to M$.
\end{lemma}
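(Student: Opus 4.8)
The plan is to build $\tau$ as a maximal partial extension and invoke \cref{univ} to show maximality forces the domain to be all of $L$. Concretely, I would consider the set $P$ of all pairs $(E,\rho)$ where $E$ is an intermediate field of $L/K$ and $\rho \colon E \to M$ is a homomorphism with $\rho|_K = \sigma$, partially ordered by declaring $(E,\rho) \le (E',\rho')$ whenever $E \subseteq E'$ and $\rho'|_E = \rho$. First I would note $P$ is nonempty, since $(K,\sigma) \in P$.

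The next step is to verify the hypothesis of Zorn's lemma. Given a chain $\mathcal{C} \subseteq P$, its upper bound is obtained by taking $E^* = \bigcup_{(E,\rho) \in \mathcal{C}} E$ and defining $\rho^* \colon E^* \to M$ by $\rho^*(x) = \rho(x)$ for any $(E,\rho) \in \mathcal{C}$ with $x \in E$. I would check that $E^*$ is again an intermediate field (a nested union of subfields is a subfield) and that $\rho^*$ is well-defined and a homomorphism, because any two elements of $E^*$ lie in a common member of the chain on which the relevant maps agree. Thus $(E^*,\rho^*) \in P$ is an upper bound for $\mathcal{C}$, and Zorn's lemma yields a maximal element $(E,\tau)$ of $P$.

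It remains to show $E = L$, which is where the real content lies. Suppose toward a contradiction that there exists $a \in L \setminus E$. Since $L/K$ is algebraic, $a$ is algebraic over $E$; let $f \in E[T]$ be its minimal polynomial. Viewing $M$ as an extension of $E$ via $\tau$, \cref{univ} identifies $\Hom_E(E(a),M)$ with the set of roots in $M$ of the polynomial obtained from $f$ by applying $\tau$ to its coefficients. Because $M$ is algebraically closed and this polynomial is nonconstant, such a root exists, so $\tau$ extends to a homomorphism $E(a) \to M$. As $E(a) \supsetneq E$, this contradicts the maximality of $(E,\tau)$. Hence $E = L$ and $\tau \colon L \to M$ is the desired extension.

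The main obstacle is not the one-step extension itself, which follows cleanly from \cref{univ} together with algebraic closedness of $M$, but rather making the set-theoretic framework precise: one must confirm that chains have upper bounds, i.e.\ that the union of a chain of subfields carries a consistent homomorphism. This is the only place where a choice principle enters, and it is genuinely needed since $L/K$ may be infinite; for a finite extension one could instead argue by induction on $[L:K]$ exactly as in the proof of \cref{atmost}, adjoining one algebraic element at a time.
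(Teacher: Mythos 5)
Your proposal is correct and follows essentially the same route as the paper: a Zorn's lemma argument on the poset of partial extensions $(E,\rho)$, with unions giving upper bounds for chains, and \cref{univ} together with algebraic closedness of $M$ showing that a maximal element must have domain $L$ (the paper argues $E(a)=E$ directly rather than by contradiction, but this is the same argument). Your closing observation about induction on the degree sufficing in the finite case matches the remark the paper makes immediately after its proof.
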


\begin{proof}
Consider the set of pairs $(E,\tau)$, where $K \subseteq E \subseteq L$ is an intermediate field and $\tau : E \to M$ is a homomorphism extending $\sigma$. We define $(E,\tau) \leq (E',\tau')$ by $E \subseteq E'$ and $\tau'|_E = \tau$. This defines a partial order in which every chain has an upper bound -- simply take the union. Thus, by Zorn's Lemma there is a maximal pair $(E,\tau)$, and we need to prove $E = L$. Let $a \in L$ with minimal polynomial $f \in E[T]$ over $E$. By means of $\tau : E \to M$ we can see $M$ as an extension of $E$. Then the image of $f$ in $M[T]$ has a root in $M$, since $M$ is algebraically closed. By \cref{univ} we can therefore extend $\tau$ to a homomorphism $E(a) \to M$. Since $(E,\tau)$ is maximal, this shows $E(a) = E$, so that $a \in E$.
\end{proof}

\begin{rem}
In the case of a finite extension, which we are mainly interested in, Zorn's Lemma is not necessary to prove \cref{extend}. Here, an induction on the degree does the job.
\end{rem}
 
\begin{lemma} \label{auto}
Let $L/K$ be an algebraic extension. Then
\[\Hom_K(L,L) = \Aut_K(L).\]
\end{lemma}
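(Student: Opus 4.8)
The inclusion $\Aut_K(L) \subseteq \Hom_K(L,L)$ is immediate, so the content is the reverse inclusion: I must show that an arbitrary $\sigma \in \Hom_K(L,L)$ is a $K$-automorphism. Since every field homomorphism is injective, $\sigma$ is automatically injective, and the only thing left to establish is surjectivity. My plan is therefore to fix an arbitrary element $a \in L$ and exhibit a preimage under $\sigma$, using that $L/K$ is algebraic.

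The key idea I would use is that $\sigma$ permutes the (finite) set of conjugates of $a$. Concretely, let $f \in K[T]$ be the minimal polynomial of $a$ over $K$, and let $R = \{b \in L : f(b) = 0\}$ be the set of roots of $f$ lying in $L$. This set is finite, since $f$ has at most $\deg(f)$ roots, and it is nonempty because $a \in R$. The crucial observation is that $\sigma$ maps $R$ into itself: for $b \in R$, writing $f = \sum_i c_i T^i$ with $c_i \in K$ and using that $\sigma$ fixes $K$, one computes $f(\sigma(b)) = \sum_i c_i \sigma(b)^i = \sigma\bigl(\sum_i c_i b^i\bigr) = \sigma(f(b)) = \sigma(0) = 0$, so $\sigma(b) \in R$.

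Thus $\sigma$ restricts to a map $R \to R$. Since $\sigma$ is injective and $R$ is finite, this restriction is an injective self-map of a finite set, hence a bijection of $R$ onto itself. In particular $a \in R$ lies in the image of $\sigma|_R$, so there is some $b \in R \subseteq L$ with $\sigma(b) = a$. As $a \in L$ was arbitrary, $\sigma$ is surjective, and therefore a $K$-automorphism.

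I do not anticipate a genuine obstacle here: the algebraicity of $L/K$ is exactly what guarantees that every element sits inside a finite, $\sigma$-stable set of conjugates, and the finite-set pigeonhole argument then upgrades injectivity to surjectivity. The only point demanding a little care is the verification that $\sigma$ preserves $R$, which rests squarely on $\sigma$ being a $K$-homomorphism so that it fixes the coefficients of the minimal polynomial.
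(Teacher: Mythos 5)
Your proof is correct and follows essentially the same route as the paper: reduce to surjectivity, observe that $\sigma$ restricts to an injective self-map of the finite set of roots in $L$ of the minimal polynomial of $a$, and conclude by finiteness that $a$ has a preimage. The only difference is that you spell out the (easy) verification that $\sigma$ preserves the root set, which the paper leaves implicit.
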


\begin{proof}
Let $\sigma : L \to L$ be a $K$-homomorphism. Of course, $\sigma$ is injective. In order to show that $\sigma$ is surjective, let $a \in L$ and let $f \in K[T]$ be its minimal polynomial. Let $N \subseteq L$ be the set of roots of $f$ in $L$. Then $\sigma(N) \subseteq N$, so that $\sigma$ restricts to an injective map $N \to N$. Since $N$ is finite, it has to be surjective. In particular, $a \in N$ has a preimage.
\end{proof}


\section{Combinatorial results}

It is a well-known fact that a vector space cannot be the union of two proper subspaces. The following combinatorial results are variants of this fact and are contained in \cite{BBS}. We include the proofs for the convenience of the reader.
 
\begin{lemma} \label{vecunion}
A vector space over an infinite field cannot be written as the union of finitely many proper subspaces.
\end{lemma}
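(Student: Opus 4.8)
The plan is to argue by contradiction and to exploit that an infinite field provides infinitely many vectors along any line. Suppose $V = U_1 \cup \dots \cup U_n$ with each $U_i$ a proper subspace, and choose such a decomposition with $n$ minimal. Then $n \geq 2$, and minimality forces the covering to be irredundant: no $U_i$ is contained in the union of the others, for otherwise we could discard it and obtain a shorter decomposition. In particular there is a vector $x \in U_1$ lying in none of $U_2, \dots, U_n$, and since $U_1$ is proper there is a vector $y \in V \setminus U_1$.

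Next I would examine the line $\{x + \lambda y : \lambda \in K\}$. For $\lambda \neq 0$ the vector $x + \lambda y$ cannot lie in $U_1$: otherwise, subtracting $x \in U_1$ would give $\lambda y \in U_1$ and hence $y \in U_1$, contrary to the choice of $y$. So each such vector lies in one of $U_2, \dots, U_n$. Because $K$ is infinite there are infinitely many choices of $\lambda \neq 0$ but only finitely many subspaces $U_2, \dots, U_n$, so by the pigeonhole principle two distinct scalars $\lambda \neq \mu$ yield vectors $x + \lambda y$ and $x + \mu y$ lying in the same $U_i$ with $i \geq 2$.

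Finally I would extract the contradiction. From $x + \lambda y, x + \mu y \in U_i$ the difference $(\lambda - \mu) y \in U_i$, and as $\lambda - \mu \neq 0$ this gives $y \in U_i$; consequently $x = (x + \lambda y) - \lambda y \in U_i$ as well. But $i \geq 2$, contradicting that $x$ lies in none of $U_2, \dots, U_n$.

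I expect the main conceptual step to be the reduction to an irredundant covering, since this is what guarantees the existence of the distinguished vector $x \in U_1 \setminus (U_2 \cup \dots \cup U_n)$. Once $x$ and $y$ are in place, the pigeonhole argument on the line through $x$ in the direction $y$ is routine, and it is precisely there that the infinitude of $K$ enters.
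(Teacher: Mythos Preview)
Your proof is correct and essentially identical to the paper's: both pick a vector in $U_1 \setminus (U_2 \cup \cdots \cup U_n)$, a vector outside $U_1$, and then run a pigeonhole argument on the line $x + \lambda y$ to force some $U_i$ with $i \geq 2$ to contain both $x$ and $y$. The only cosmetic difference is that the paper phrases the existence of $x$ via induction on $n$ rather than via a minimal (irredundant) covering, which amounts to the same thing.
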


\begin{proof}
Let $K$ be an infinite field and $V$ be a vector space over $K$ which can be written as
\[V = V_1 \cup \cdots \cup V_n\]
with proper subspaces $V_1,\dotsc,V_n$. We use induction on $n$. The cases $n=0,1$ are trivial. Let's assume $n \geq 2$. By induction hypothesis there is some $v \in V \setminus (V_2 \cup \cdots \cup V_n)$. Then $v \in V_1$. Choose some $w \in V \setminus V_1$. For every $\lambda \in K^{\times}$ we have $v + \lambda w \notin V_1$, and these are infinitely many vectors. Thus, there is some $V_j$ with $1 < j \leq n$ which contains infinitely many of these vectors. Subtracting two of them yields $(v + \lambda w) - (v +  \lambda' w) = (\lambda - \lambda')w$, so that $w \in V_j$ and thus $v = (v + \lambda w) - \lambda w \in V_j$, which is a contradiction.
\end{proof}

It is worth mentioning that \cref{vecunion} can be used to prove the primitive element theorem \cite[Thm.\ 5.4.11]{Douady}.

\begin{lemma} \label{groupintersect}
Let $G$ be an infinite group. Let $G_1,\dotsc,G_n$ be finitely many subgroups of $G$ with $G = \bigcup_{1 \leq i \leq n} G_i$ (as sets) such that $G \neq \bigcup_{1 \leq i \leq n,\, i \neq j} G_i$ for all $j$. Then their intersection $\bigcap_{1 \leq i \leq n} G_i$ is infinite.
\end{lemma}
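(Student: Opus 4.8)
The plan is to prove, by induction on $k$, that after suitably reordering the subgroups one has that $G_1 \cap \cdots \cap G_k$ is infinite for every $k \le n$; the case $k = n$ is exactly the assertion. To start the induction I first note that some $G_i$ must be infinite, for otherwise $G$ would be a finite union of finite sets; after relabelling I may assume $G_1$ is infinite, which settles $k = 1$. The single fact I will extract from the irredundancy hypothesis is that $G_1 \cup \cdots \cup G_k \ne G$ whenever $k \le n-1$: indeed, choosing any index $j > k$, the set $\bigcup_{i=1}^k G_i$ is contained in $\bigcup_{i \ne j} G_i$, which by hypothesis is a proper subset of $G$.

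The engine of the induction is a translation-and-pigeonhole argument. Suppose $W := G_1 \cap \cdots \cap G_k$ is infinite and $k \le n-1$. Using the fact just mentioned, I would choose an element $b \in G \setminus (G_1 \cup \cdots \cup G_k)$. For every $w \in W$ we have $w \in G_i$ and $b \notin G_i$ for each $i \le k$, so $wb \notin G_i$ for all $i \le k$; since $wb$ nevertheless lies in $G = \bigcup_i G_i$, it must lie in some $G_m$ with $m > k$. As $W$ is infinite and there are only finitely many possible values of $m$, infinitely many elements $w \in W$ share a single such index $m$. For any two of them, $w$ and $w'$, the quotient $ww'^{-1} = (wb)(w'b)^{-1}$ lies in $G_m$, and it also lies in the subgroup $W$; fixing $w'$ and letting $w$ range over these infinitely many elements, the injective map $w \mapsto ww'^{-1}$ shows that $W \cap G_m$ is infinite. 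Relabelling the still-unused subgroups so that this $m$ becomes $k+1$, I conclude that $G_1 \cap \cdots \cap G_{k+1}$ is infinite, which completes the inductive step.

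Iterating from $k = 1$ up to $k = n$ yields the claim; at the final step $k = n-1$ the only available index is $m = n$, so the argument closes precisely on $\bigcap_{i=1}^n G_i$. The step I expect to need the most care is verifying that the pigeonhole index $m$ is genuinely larger than $k$, since this is what guarantees real progress rather than a vacuous $W \cap G_m = W$; it hinges on choosing $b$ outside all of $G_1, \dots, G_k$ at once, and such a $b$ exists because for any single index $j > k$ irredundancy already forces $G_1 \cup \cdots \cup G_k \subseteq \bigcup_{i \ne j} G_i \ne G$. I also want to check that the relabelling is harmless: it merely renames the subgroups, so the irredundancy hypothesis, being symmetric in the indices, persists at every stage. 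It is worth emphasizing that this approach never compares indices or orders and makes no appeal to Neumann's covering theorem; the infinitude of the intersection is produced directly.
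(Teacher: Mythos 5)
Your proof is correct and is essentially identical to the paper's own argument: the same induction on $k$, the same choice of $b$ outside the union of the first $k$ subgroups, the same translation-and-pigeonhole step, and the same quotient trick $ww'^{-1}=(wb)(w'b)^{-1}$; your relabelling of indices is just a cosmetic variant of the paper's tracking of distinct indices $i_1,\dotsc,i_k$. If anything, you make explicit a step the paper leaves implicit, namely why irredundancy guarantees $G_1\cup\cdots\cup G_k\neq G$ for $k\leq n-1$.
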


\begin{proof}
By induction on $k$ we will prove that there are pairwise distinct indices $i_1,\dotsc,i_k$ such that $G_{i_1} \cap \cdots \cap G_{i_k}$ is infinite; the case $k=n$ then finishes the proof. As for the case  $k=1$, since $G$ is infinite at least one $G_i$ has to be infinite as well. Now let $ k < n$ and assume that the claim is proven for $k$. By assumption we have $G \neq G_{i_1} \cup \cdots \cup G_{i_k}$. Pick some $b \in G$ with $b \notin G_{i_1} \cup \cdots \cup G_{i_k}$. For every element $a$ of the infinite group $G_{i_1}  \cap \cdots \cap G_{i_k}$ we have $ab \notin  G_{i_1} \cup \cdots \cup G_{i_k}$, which yields an index $j \neq i_1,\dotsc,i_k$ with $ab \in G_j$. So there must be some index $i_{k+1} \neq i_1,\dotsc,i_k$ such that the set
\[S \coloneqq \bigl\{a \in G_{i_1}  \cap \cdots \cap G_{i_k} : ab \in G_{i_{k+1}}\bigr\}\]
is infinite. For all $a,a' \in S$ we then have $aa'^{-1} = (ab)(a'b)^{-1} \in G_{i_{k+1}}$, on the other hand also $aa'^{-1} \in G_{i_1} \cap \cdots \cap G_{i_k}$. Therefore $SS^{-1} \subseteq G_{i_1} \cap \cdots \cap G_{i_k} \cap G_{i_{k+1}}$ is infinite.
\end{proof}

\begin{lemma} \label{fieldunion}
A field cannot be written as the union of finitely many proper subfields.
\end{lemma}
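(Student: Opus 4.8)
The plan is to distinguish two cases according to whether $K$ is finite or infinite, and in each case to exhibit the obstruction using the combinatorial lemmas above. Suppose first that $K$ is finite. Here I would use that the multiplicative group $K^\times$ is cyclic, say generated by $g$. Any subfield containing $g$ must contain every power of $g$, hence all of $K^\times$ together with $0$, and therefore equals $K$. Thus no proper subfield of $K$ contains $g$, so $g$ lies outside every proper subfield, and a fortiori outside any finite union of them. This already shows that $K$ is not the union of finitely many proper subfields.

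Now suppose $K$ is infinite and, aiming for a contradiction, that $K = K_1 \cup \cdots \cup K_n$ with each $K_i \subsetneq K$ a proper subfield. Since $0$ lies in every $K_i$, passing to nonzero elements gives $K^\times = K_1^\times \cup \cdots \cup K_n^\times$, a cover of the infinite group $G = K^\times$ by the subgroups $K_i^\times$. After discarding redundant terms I may assume the cover is irredundant, i.e.\ that removing any single $K_i^\times$ destroys the equality; note that a single subgroup cannot suffice, since $K_j^\times = K^\times$ would force the contradiction $K_j = K$. Then \cref{groupintersect} applies and shows that the intersection $\bigcap_i K_i^\times$ is infinite. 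Hence the subfield $D \coloneqq \bigcap_i K_i$ has infinite multiplicative group, so $D$ itself is infinite.

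Finally, $D$ is an infinite field contained in every $K_i$, so each $K_i$ is a $D$-subspace of the $D$-vector space $K$, and it is a proper subspace since $K_i \neq K$. But the (irredundant) cover then expresses $K$ as a union of finitely many proper $D$-subspaces, contradicting \cref{vecunion}. This contradiction completes the infinite case.

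I expect the main obstacle to be the infinite case in positive characteristic: there the prime field is finite, so one cannot apply \cref{vecunion} directly over the prime field, and the role of \cref{groupintersect} is precisely to manufacture an infinite common subfield $D$ over which the vector space argument becomes available. The delicate bookkeeping point is ensuring that the cover of $K^\times$ is made irredundant before \cref{groupintersect} is invoked, as this hypothesis is essential for the conclusion that the common intersection is infinite.
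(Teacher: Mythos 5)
Your proof is correct and follows essentially the same route as the paper: the finite case via a generator of the cyclic group $K^\times$, and the infinite case by combining \cref{groupintersect} with \cref{vecunion} over the infinite intersection field. The only cosmetic differences are that the paper applies \cref{groupintersect} to the additive groups of the subfields rather than to the multiplicative groups $K_i^\times$, and secures the irredundancy hypothesis by induction on $n$ rather than by passing to an irredundant subcover; both variants work equally well.
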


\begin{proof}
Assume $L$ is a field such that $L = L_1 \cup \cdots \cup L_n$ with proper subfields $L_1,\dotsc,L_n$. If $L$ is finite, then $L^{\times}$ is cyclic. Choose a generator of $L^{\times}$. It lies in some $L_i$, so that $L = L_i$, contradiction. Now assume that $L$ is infinite. We proceed by induction on $n$. The case $n=0$ is trivial. Let $n \geq 1$ and assume the claim is true for $n-1$. By induction hypothesis $L \neq \bigcup_{1 \leq i \leq n,\, i \neq j} L_i$ for all indices $j$. Thus, \cref{groupintersect} applied to the additive groups implies that the intersection $K \coloneqq L_1 \cap \cdots \cap L_n$ is infinite. Now we may regard $L$ as a vector space over $K$ and $L_i$ as subspaces of $L$. Then \cref{vecunion} gives the desired contradiction.
\end{proof}

\begin{rem}
For what follows, actually a weaker form of \cref{fieldunion} is sufficient, namely that for a finite extension $L/K$ the field $L$ is not the union of finitely many proper intermediate fields. But this follows immediately from \cref{vecunion} if $K$ is infinite, and for finite fields $K$ the field $L$ is also finite, so that $L^{\times}$ is cyclic and we are done. This shortens the proof, but we did not choose this path here since it would restrict \cref{subgroupclass} below to finite extensions.
\end{rem}


\section{Classification of subgroups}

\begin{defi}
Let $L/K$ be an extension of fields and $H \subseteq \Aut_K(L)$ be a subgroup. We define the \emph{fixed field} as
\[L^H \coloneqq \{a \in L : \forall \sigma \in H ~ (\sigma(a)=a) \}.\]
\end{defi}

This is clearly an intermediate field of $L/K$.

We can rephrase the definition using group actions: In fact, the group $\Aut_K(L)$ acts on $L$ in a natural way, and $L^H$ is nothing but the field of fixed points of this action when restricted to the subgroup $H$.

We have the obvious relationship $H \subseteq \Aut_{L^H}(L)$. In some cases, the converse is also true:

\begin{prop} \label{subgroupclass}
Let $L/K$ be an extension and $H \subseteq \Aut_K(L)$ be a finite subgroup. Then
\[H = \Aut_{L^H}(L).\]
\end{prop}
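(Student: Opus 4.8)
The plan is to establish the nontrivial inclusion $\Aut_{L^H}(L) \subseteq H$, since the reverse inclusion $H \subseteq \Aut_{L^H}(L)$ is immediate from the definition of the fixed field. Write $F \coloneqq L^H$ and let $\tau \in \Aut_F(L)$ be arbitrary; the goal is to show $\tau \in H$.

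First I would analyze, for each element $a \in L$, where $\tau$ can possibly send it. Consider the $H$-orbit of $a$, that is the finite set $\{\sigma(a) : \sigma \in H\}$, and let $a_1,\dotsc,a_r$ be its distinct elements. Since $H$ is a group, every $\rho \in H$ permutes these elements, so the monic polynomial $g(T) \coloneqq \prod_{j=1}^r (T - a_j)$ has coefficients fixed by all of $H$, i.e.\ $g \in F[T]$. As $g(a) = 0$ and $\tau$ fixes $F$ pointwise, applying $\tau$ to this relation gives $g(\tau(a)) = 0$; hence $\tau(a)$ is one of the $a_j$, which means $\tau(a) = \sigma(a)$ for some $\sigma \in H$. (In particular this shows that $L/F$ is algebraic, so that $\Aut_F(L) = \Hom_F(L,L)$ by \cref{auto}, although here we are already handed an automorphism.)

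The key idea is now to turn this pointwise statement into a covering of $L$ by subfields. For each $\sigma \in H$ set
\[L_\sigma \coloneqq \{a \in L : \tau(a) = \sigma(a)\}.\]
A direct check shows that each $L_\sigma$ is a subfield of $L$, since it is closed under the field operations because both $\tau$ and $\sigma$ are field homomorphisms. The previous paragraph shows precisely that $L = \bigcup_{\sigma \in H} L_\sigma$, a union of finitely many subfields. By \cref{fieldunion} a field is never the union of finitely many \emph{proper} subfields, so at least one $L_\sigma$ must equal $L$. For that $\sigma$ we have $\tau(a) = \sigma(a)$ for all $a \in L$, i.e.\ $\tau = \sigma \in H$, as desired.

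The main obstacle is the second step: recognizing that the possible values of $\tau(a)$ all lie in the single $H$-orbit of $a$, which is what forces the covering $L = \bigcup_\sigma L_\sigma$ to consist of honest subfields rather than an uncontrolled collection of subsets. Once this is in place, the combinatorial input \cref{fieldunion} finishes the argument almost mechanically, and the finiteness of $H$ is exactly what makes that input applicable.
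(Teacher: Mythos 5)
Your proof is correct and takes essentially the same approach as the paper's: you build the $H$-orbit polynomial of $a$ with coefficients in $L^H$, deduce that $\tau(a)$ must equal $\sigma(a)$ for some $\sigma \in H$, cover $L$ by the equalizer subfields $\{a \in L : \tau(a)=\sigma(a)\}$, and invoke \cref{fieldunion}. The only cosmetic difference is that you take the product over the distinct orbit elements rather than over all $\sigma \in H$ (which may repeat factors), and this changes nothing in the argument.
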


\begin{proof}
Let $\tau \in \Aut_{L^H}(L)$. We need to prove $\tau \in H$. First, we claim that
\[L = \bigcup_{\sigma \in H} \{\sigma = \tau\},\]
where $\{\sigma = \tau\}$ is a short notation for the subfield $\{a \in L : \sigma(a) = \tau(a)\}$, the equalizer of $\sigma,\tau$. Let $a \in L$. In order to use the assumption that $\tau$ fixes elements of $L^H$, we need to somehow come up with elements of $L^H$. Consider the polynomial
\[p \coloneqq \prod_{\sigma \in H} \bigl(T - \sigma(a)\bigr) \in L[T].\]
Of course, this is only well-defined since $H$ is finite, and we have $p(a)=0$. The natural action of $H$ on $L$ extends to an action on $L[T]$. The polynomial $p$ is clearly fixed by this action since the action just permutes the linear factors. Hence, we have
\[p \in L[T]^H = L^H[T].\]
Thus, $\tilde{\tau} : L[T] \to L[T]$ fixes $p$, i.e.\ $\tilde{\tau}(p)=p$. Since $\tau(a)$ is a root of $\tilde{\tau}(p)=p$, there is some $\sigma \in H$ with $\sigma(a)=\tau(a)$, so that $a \in \{\sigma = \tau\}$. This proves our claim.

Now, each equalizer $\{\sigma = \tau\}$ is a subfield of $L$. Thus, \cref{fieldunion} implies that there is some $\sigma \in H$ with $L = \{\sigma = \tau\}$, which just means $\sigma = \tau$.
\end{proof}


\section{Separable extensions}

\begin{defi}
Let $\alg{K}$ be an algebraic closure of $K$ and $L/K$ be an algebraic extension. We call $a \in L$ \emph{separable} over $K$ if its minimal polynomial $f \in K[T]$ has only simple roots in $\alg{K}$. The extension $L/K$ is called \emph{separable} if every element of $L$ is separable over $K$.
\end{defi}

\begin{lemma} \label{sepsub}
Let $L/E/K$ be two algebraic extensions. If $L/K$ is separable, then $L/E$ and $E/K$ are separable as well.
\end{lemma}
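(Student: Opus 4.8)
The plan is to treat the two asserted extensions separately: $E/K$ will be essentially immediate, while $L/E$ requires a short argument comparing minimal polynomials over $K$ and over $E$.

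For $E/K$ I would simply use that separability of an element is a property of its minimal polynomial over the base field. If $a \in E$, then $a \in L$ as well, and the minimal polynomial of $a$ over $K$ is the same whether we view $a$ as an element of $E$ or of $L$. Since $L/K$ is separable, this minimal polynomial has only simple roots in $\alg{K}$, which is precisely what it means for $a$ to be separable over $K$. Hence every element of $E$ is separable over $K$, so $E/K$ is separable; no genuine work is needed.

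For $L/E$ I would fix $a \in L$ and compare its minimal polynomial $f \in K[T]$ over $K$ with its minimal polynomial $g \in E[T]$ over $E$. Since $f \in K[T] \subseteq E[T]$ and $f(a)=0$, the minimal polynomial $g$ divides $f$ in $E[T]$. The point is then that a divisor of a polynomial with only simple roots again has only simple roots. To make this precise inside a single algebraically closed field, I would choose an algebraic closure $\Omega$ of $E$ and observe that, because $E/K$ is algebraic, $\Omega$ is simultaneously an algebraic closure of $K$ (transitivity of algebraic extensions). As the number of distinct roots of $f$ does not depend on the chosen algebraic closure of $K$, the hypothesis that $a$ is separable over $K$ shows that $f$ factors into pairwise distinct linear factors in $\Omega[T]$. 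Since $g \mid f$ in the unique factorization domain $\Omega[T]$, every linear factor of $g$ occurs in $f$ with multiplicity one, so $g$ too has only simple roots in $\Omega$. Thus $a$ is separable over $E$, and $L/E$ is separable.

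In order, the steps are: (i) dispose of $E/K$ via invariance of the minimal polynomial; (ii) for $L/E$, record the divisibility $g \mid f$; (iii) pass to a common algebraic closure $\Omega$ of $E$ and of $K$; (iv) conclude that the simple roots of $f$ force simple roots of $g$. The main obstacle is bookkeeping rather than depth: one must justify that separability may be tested in \emph{any} algebraic closure, so that the $\alg{K}$ fixed in the definition can be replaced by $\Omega$, and that an algebraic closure of $E$ doubles as one of $K$. Once these two points are settled, the divisibility argument in $\Omega[T]$ is immediate.
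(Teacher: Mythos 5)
Your proposal is correct and follows the same route as the paper: $E/K$ is immediate because the minimal polynomial of an element of $E$ over $K$ is unchanged, and $L/E$ follows from the divisibility of the minimal polynomial over $E$ by the one over $K$. The paper states this in two sentences; you have merely supplied the routine details (the divisibility argument in $\Omega[T]$ and the fact that an algebraic closure of $E$ serves as one of $K$) that the paper leaves to the reader.
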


\begin{proof}
The claim for $E/K$ is trivial. The claim for $L/E$ follows from the observation that the minimal polynomial of an element of $L$ over $E$ divides the minimal polynomial over $K$.
\end{proof}

\begin{prop} \label{counthom}
If $L/K$ is finite separable, then $\Hom_K(L,\alg{K})$ has exactly $[L:K]$ elements.
\end{prop}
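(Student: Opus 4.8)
The plan is to prove the equality by induction on $[L:K]$, using \cref{atmost} for one inequality and the separability hypothesis together with \cref{univ} for the matching lower bound. By \cref{atmost} we already know that $\Hom_K(L,\alg{K})$ has at most $[L:K]$ elements, so it suffices to produce at least that many $K$-homomorphisms $L \to \alg{K}$.

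For the base case $[L:K]=1$ we have $L=K$, and there is exactly one element of $\Hom_K(K,\alg{K})$, namely the structure map. Now suppose $[L:K]>1$ and pick some $a \in L \setminus K$ with minimal polynomial $f \in K[T]$, so that $d \coloneqq [K(a):K] = \deg(f) > 1$. Since $L/K$ is separable, $a$ is separable over $K$, i.e.\ $f$ has $d$ distinct roots in $\alg{K}$. By \cref{univ} the map $\sigma \mapsto \sigma(a)$ identifies $\Hom_K(K(a),\alg{K})$ with the set of roots of $f$ in $\alg{K}$, so this set has exactly $d$ elements $\sigma_1,\dotsc,\sigma_d$.

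Next I would extend each $\sigma_i$ to $L$. The field $\alg{K}$ is algebraically closed and algebraic over $K$, hence also algebraic over $\sigma_i(K(a))$; viewing $\alg{K}$ as an extension of $K(a)$ via $\sigma_i$, it is therefore an algebraic closure of $K(a)$. By \cref{sepsub} the extension $L/K(a)$ is again finite separable, and $[L:K(a)] < [L:K]$, so the induction hypothesis (applied to $L/K(a)$ with $\alg{K}$ as the algebraic closure of $K(a)$) yields exactly $[L:K(a)]$ extensions of $\sigma_i$ to a homomorphism $L \to \alg{K}$. Now restriction to $K(a)$ defines a map $\Hom_K(L,\alg{K}) \to \Hom_K(K(a),\alg{K})$; its target has $d$ elements and each of its fibres has exactly $[L:K(a)]$ elements by the previous sentence, so the domain has $d \cdot [L:K(a)] = [L:K]$ elements, as desired.

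The step I expect to require the most care is the identification needed to invoke the induction hypothesis: one must observe that $\alg{K}$ legitimately serves as an algebraic closure of $K(a)$ (through each $\sigma_i$), so that counting $K(a)$-homomorphisms $L \to \alg{K}$ falls under the very statement being proved, and that the separability of $L/K(a)$ is exactly what \cref{sepsub} supplies. Everything else is then routine: the fibres of the restriction map are disjoint and exhaust $\Hom_K(L,\alg{K})$ automatically, each fibre is nonempty because $\sigma_i$ does extend, and the multiplicativity $d \cdot [L:K(a)] = [L:K]$ is just the degree formula.
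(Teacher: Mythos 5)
Your proof is correct and takes essentially the same approach as the paper: the paper simply says to recycle the induction from \cref{atmost}, observing that separability makes the count of extensions to $K(a)$ exact (via \cref{univ}) and that $L/K(a)$ remains separable (\cref{sepsub}), which is exactly your fibre-counting argument written out in full. Your explicit check that $\alg{K}$ serves as an algebraic closure of $K(a)$ via each $\sigma_i$ — needed so the induction hypothesis applies — is a point the paper leaves implicit, but it does not change the route.
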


\begin{proof}
We can just recycle the proof of \cref{atmost} (which showed inequality). In the induction step we only need to observe 1) that $K \to \alg{K}$ admits exactly $[K(a):K]$ extensions to $K(a)$ because $a$ is separable, and 2) that $L/K(a)$ is separable by \cref{sepsub}.
\end{proof}

\begin{rem}
Actually, a finite extension $L/K$ is separable if and only if $\Hom_K(L,\alg{K})$ has $[L:K]$ elements \cite[Lem.\ VII.4.24]{Aluffi}. This is crucial for the theory of separable extensions.
\end{rem}

\begin{lemma} \label{ground}
Let $L/K$ be a separable extension and $a \in L$ be an element. Assume that for all $K$-homomorphisms $\sigma,\tau : L \to \alg{K}$ we have $\sigma(a)=\tau(a)$. Then $a \in K$.
\end{lemma}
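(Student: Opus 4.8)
The plan is to reduce the problem to the simple extension $K(a)/K$ and to exploit separability as a counting device. Let $f \in K[T]$ be the minimal polynomial of $a$ over $K$ and set $d \coloneqq \deg(f) = [K(a):K]$. Since $L/K$ is separable, the element $a$ is separable over $K$, so $f$ has exactly $d$ distinct roots in $\alg{K}$. By \cref{univ}, the assignment $\rho \mapsto \rho(a)$ is a bijection from $\Hom_K(K(a),\alg{K})$ onto this set of roots; in particular there are exactly $d$ homomorphisms $K(a) \to \alg{K}$, and they take $d$ pairwise distinct values on $a$.

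The bridge to the hypothesis is \cref{extend}. Since $L/K$ is separable it is in particular algebraic, hence so is $L/K(a)$, and $\alg{K}$ is algebraically closed. Therefore every $K$-homomorphism $\rho : K(a) \to \alg{K}$ extends to a $K$-homomorphism $\tilde{\rho} : L \to \alg{K}$ satisfying $\tilde{\rho}(a) = \rho(a)$. This is exactly what lets me transport the information contained in the hypothesis — which is phrased for maps out of $L$ — down to the maps out of $K(a)$ that carry the separability count.

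Now I argue by contradiction. Suppose $d > 1$. Then by the first paragraph there exist two homomorphisms $\rho_1, \rho_2 : K(a) \to \alg{K}$ with $\rho_1(a) \neq \rho_2(a)$. Extending them via \cref{extend} to $\tilde{\rho}_1, \tilde{\rho}_2 : L \to \alg{K}$ produces two $K$-homomorphisms out of $L$ with $\tilde{\rho}_1(a) = \rho_1(a) \neq \rho_2(a) = \tilde{\rho}_2(a)$, contradicting the assumption that all $K$-homomorphisms $L \to \alg{K}$ agree on $a$. Hence $d = 1$, so $f = T - a$ and therefore $a \in K$.

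I do not expect a serious obstacle here: once the right objects are identified, everything is a direct application of the preliminaries. The only point requiring care is the logical direction of \cref{extend} — the hypothesis lives on $L$ while the useful count lives on $K(a)$, and the extension lemma is precisely what glues the two together. Separability enters only to guarantee that distinct homomorphisms on $K(a)$ genuinely produce distinct values on $a$, which is what makes $d > 1$ incompatible with the hypothesis.
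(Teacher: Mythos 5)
Your proof is correct and is essentially the same as the paper's: both use \cref{extend} to transfer the hypothesis from $L$ down to $K(a)$, \cref{univ} to identify $K$-homomorphisms $K(a) \to \alg{K}$ with roots of the minimal polynomial, and separability (simple roots, plus splitting in $\alg{K}$) to force that polynomial to be linear. The only cosmetic difference is that you run the argument as a contradiction on $\deg(f) > 1$, whereas the paper reduces to $L = K(a)$ and concludes directly that the minimal polynomial has exactly one root and hence degree one.
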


\begin{proof}
Because every $K$-homomorphism $K(a) \to \alg{K}$ extends to $L$ by \cref{extend}, we may assume $L = K(a)$. Because of \cref{univ} the assumption means that the minimal polynomial of $a$ has exactly one root in $\alg{K}$. On the other hand, it has only simple roots, since $a$ is separable. Hence, it must be a linear polynomial, meaning $a \in K$.
\end{proof}


\section{Normal extensions}

Recall that a right action of a group $G$ on a set $X$ is called \emph{transitive} if $X$ is non-empty and for all $x,y \in X$ there is some $g \in G$ with $y = xg$. Equivalently, $X$ has exactly one $G$-orbit.

\begin{defi}\label{defnormal}
An algebraic extension $L/K$ is called \emph{normal} if the natural right action of $\Aut_K(L)$ on the set $\Hom_K(L,\alg{K})$ is transitive.
 
Note that the set $\Hom_K(L,\alg{K})$ is non-empty by \cref{extend}. So the definition means that for all $K$-homomorphisms $\sigma,\tau : L \to \alg{K}$ there is some $K$-automorphism $\varphi : L \to L$ with $\tau = \sigma \circ \varphi$. Here, $\varphi$ is unique since $\sigma$ is injective.

Thus, if we fix a $K$-homomorphism $\sigma : L \to \alg{K}$, the extension $L/K$ is normal if and only if the map
\[\Aut_K(L) \stackrel{\ref{auto}}{=\joinrel=} \Hom_K(L,L) \to \Hom_K(L,\alg{K}),\, \varphi \mapsto \sigma \circ \varphi\]
is bijective. Also, for the existence of $\varphi$ above it is clearly enough to check $\im(\tau) \subseteq \im(\sigma)$. This observation together with $\alg{E}=\alg{K}$ already implies the next result.
\end{defi}

\begin{lemma}\label{normsub}
Let $L/E/K$ be two algebraic extensions. If $L/K$ is normal, then $L/E$ is normal as well. \hfill \qed
\end{lemma}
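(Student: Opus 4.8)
The plan is to use the characterization of normality in terms of the right action of $\Aut_K(L)$ on $\Hom_K(L,\alg K)$, together with the remark already made in \cref{defnormal} that for producing the required automorphism it suffices to check an inclusion of images. First I would recall that since $L/K$ is algebraic and $L/E/K$, the extension $L/E$ is algebraic as well, and an algebraic closure $\alg E$ of $E$ may be identified with $\alg K$ because $E/K$ is algebraic; this is exactly the observation $\alg E = \alg K$ flagged at the end of \cref{defnormal}. So the objects $\Hom_E(L,\alg E)$ and $\Hom_E(L,\alg K)$ coincide, and $\Hom_E(L,\alg K)$ is a subset of $\Hom_K(L,\alg K)$.

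Next I would take two arbitrary $E$-homomorphisms $\sigma,\tau : L \to \alg K$ and aim to produce a $\varphi \in \Aut_E(L)$ with $\tau = \sigma \circ \varphi$. Since every $E$-homomorphism is in particular a $K$-homomorphism, normality of $L/K$ supplies some $\varphi \in \Aut_K(L)$ with $\tau = \sigma \circ \varphi$, and by the remark this $\varphi$ is unique. The only thing left is to check that this $\varphi$ is actually an $E$-automorphism, i.e.\ that $\varphi$ fixes $E$ pointwise.

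The main obstacle — really the only non-formal point — is verifying $\varphi|_E = \mathrm{id}_E$. Here I would use the uniqueness just mentioned. For $a \in E$, both $\sigma$ and $\tau$ fix $a$ (they are $E$-homomorphisms), so the equation $\tau = \sigma\circ\varphi$ gives $\sigma(a) = \sigma(\varphi(a))$, and since $\sigma$ is injective we conclude $\varphi(a) = a$. Thus $\varphi$ fixes $E$ pointwise and so $\varphi \in \Aut_E(L)$. This shows the right action of $\Aut_E(L)$ on $\Hom_E(L,\alg E) = \Hom_E(L,\alg K)$ is transitive (nonemptiness coming again from \cref{extend}), which is precisely normality of $L/E$. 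I expect the proof to be short, essentially a matter of chasing the inclusion $\Hom_E(L,\alg K) \subseteq \Hom_K(L,\alg K)$ through the defining transitivity condition and checking that the resulting automorphism respects $E$.
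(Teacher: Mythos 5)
Your proof is correct and takes essentially the same route as the paper, whose ``proof'' is precisely the remark at the end of \cref{defnormal}: identify $\alg{E}$ with $\alg{K}$ (possible since $E/K$ is algebraic), observe that $E$-homomorphisms $L \to \alg{K}$ are in particular $K$-homomorphisms, and use transitivity of the $\Aut_K(L)$-action. The only cosmetic difference is the final step: the paper uses the image-inclusion criterion $\im(\tau) \subseteq \im(\sigma)$ together with \cref{auto} to obtain the $E$-automorphism $\sigma^{-1} \circ \tau$, whereas you take the automorphism $\varphi$ supplied by normality of $L/K$ and check via injectivity of $\sigma$ that it fixes $E$ pointwise --- both arguments produce the same map.
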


\begin{prop} \label{fix}
Let $L/K$ be a normal separable extension and $E$ be an intermediate field of $L/K$. Then
\[E = L^{\Aut_E(L)}.\]
\end{prop}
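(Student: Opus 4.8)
The plan is to prove the nontrivial inclusion $L^{\Aut_E(L)} \subseteq E$, since $E \subseteq L^{\Aut_E(L)}$ holds directly from the definition of the fixed field. To establish it, I would like to feed an element $a \in L^{\Aut_E(L)}$ into \cref{ground}, applied over the base field $E$ rather than $K$: it asserts that if all $E$-homomorphisms $L \to \alg{E}$ agree on $a$, then $a \in E$. Two preliminary observations make this legitimate. First, $L/E$ is separable by \cref{sepsub}, which is exactly the hypothesis \cref{ground} requires. Second, since $E/K$ is algebraic we may take $\alg{E} = \alg{K}$, so there is no ambiguity about the common target of the homomorphisms.

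The key step is to verify the hypothesis of \cref{ground} for such an $a$, and this is where normality enters. By \cref{normsub}, the extension $L/E$ is again normal, so by \cref{defnormal} the right action of $\Aut_E(L)$ on $\Hom_E(L,\alg{E})$ is transitive. Hence, given any two $E$-homomorphisms $\sigma,\tau : L \to \alg{E}$, there is some $\varphi \in \Aut_E(L)$ with $\tau = \sigma \circ \varphi$. Since $a$ is fixed by every element of $\Aut_E(L)$, in particular $\varphi(a) = a$, and therefore $\tau(a) = \sigma(\varphi(a)) = \sigma(a)$. Thus all $E$-homomorphisms into $\alg{E}$ do agree on $a$, and \cref{ground} yields $a \in E$, completing the argument.

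I do not expect a genuine obstacle here: the proof is a direct assembly of the earlier lemmas, the only real idea being the translation between the ``fixed by all automorphisms'' condition defining $L^{\Aut_E(L)}$ and the ``all homomorphisms agree'' condition required by \cref{ground}. Transitivity of the action, i.e.\ normality, is precisely the bridge that converts an automorphism $\varphi$ into a relation between the two homomorphisms $\sigma,\tau$ and back. If anything needs care, it is merely the bookkeeping that \cref{sepsub}, \cref{normsub}, and \cref{ground} are all being invoked for the intermediate extension $L/E$, and that $\alg{K}$ indeed serves as an algebraic closure of $E$.
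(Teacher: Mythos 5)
Your proof is correct and follows essentially the same route as the paper: reduce to the base field $E$ via \cref{sepsub} and \cref{normsub}, use transitivity of the $\Aut_E(L)$-action on $\Hom_E(L,\alg{E})$ to show all $E$-homomorphisms agree on a fixed element, and conclude with \cref{ground}. The paper phrases the reduction as ``it suffices to treat the case $E=K$'' and you instead carry $E$ through explicitly, but this is only a cosmetic difference.
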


\begin{proof}
By \cref{sepsub,normsub} the extension $L/E$ is normal and separable, so that it suffices to treat the special case $E=K$. Let $a \in L^{\Aut_K(L)}$. For all $K$-homomorphisms $\sigma,\tau : L \to \alg{K}$ there is some $\varphi \in \Aut_K(L)$ with $\tau = \sigma \circ \varphi$. We get $\tau(a)= \sigma(\varphi(a)) = \sigma(a)$. Thus, \cref{ground} implies $a \in K$.
\end{proof}

For the sake of completeness, we include the equivalence between \cref{defnormal} and a more common definition of a normal extension.

\begin{lemma} \label{normalchar}
An algebraic extension $L/K$ is normal if and only if every irreducible polynomial $f \in K[T]$ which has a root in $L$ splits completely over $L$, i.e.\ is a product of linear factors.
\end{lemma}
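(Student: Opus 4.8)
The plan is to prove both implications through the reformulation of normality supplied by the remarks in \cref{defnormal}: for $K$-homomorphisms $\sigma,\tau:L\to\alg{K}$, the existence of $\varphi\in\Aut_K(L)$ with $\tau=\sigma\circ\varphi$ is equivalent to $\im(\tau)\subseteq\im(\sigma)$, and since any such $\varphi$ is automatically a $K$-automorphism by \cref{auto}, surjectivity of $\varphi$ forces $\im(\tau)=\sigma(\varphi(L))=\im(\sigma)$. Reading the definition of transitivity through this lens, I would first record the clean equivalent statement that $L/K$ is normal precisely when all $K$-homomorphisms $L\to\alg{K}$ share one and the same image. I would then fix once and for all a $K$-homomorphism $\sigma_0:L\to\alg{K}$, which exists by \cref{extend}, and write $L'\coloneqq\im(\sigma_0)$, noting $\sigma_0:L\to L'$ is an isomorphism.

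For the implication \emph{normal $\Rightarrow$ splits}, let $f\in K[T]$ be irreducible with a root $a\in L$, and let $b\in\alg{K}$ be an arbitrary root of $f$. Since $a$ and $b$ have the same minimal polynomial $f$ over $K$, \cref{univ} yields a $K$-homomorphism $K(a)\to\alg{K}$ sending $a\mapsto b$, which \cref{extend} prolongs to some $\tau\in\Hom_K(L,\alg{K})$ with $\tau(a)=b$. By the reformulation $\im(\tau)=L'$, so $b=\tau(a)\in L'$. Hence every root of $f$ in $\alg{K}$ lies in $L'$; as $f$ splits into linear factors over the algebraically closed $\alg{K}$ and all these factors already have coefficients in $L'$, transporting the factorization back along $\sigma_0^{-1}$ (which fixes $f\in K[T]$) shows that $f$ is a product of linear factors in $L[T]$.

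For the converse \emph{splits $\Rightarrow$ normal}, I must verify $\im(\tau)\subseteq\im(\sigma)$ for arbitrary $\sigma,\tau\in\Hom_K(L,\alg{K})$. Fix $a\in L$ with minimal polynomial $f$; the hypothesis gives $f=\prod_i(T-a_i)$ with $a_i\in L$. Applying the coefficientwise extension of $\sigma$ to $L[T]$ and using $\sigma|_K=\mathrm{id}$ produces $f=\prod_i\bigl(T-\sigma(a_i)\bigr)$ in $\alg{K}[T]$, so every root of $f$ in $\alg{K}$ lies in $\im(\sigma)$. But $\tau(a)$ is one such root, whence $\tau(a)\in\im(\sigma)$; since $a$ was arbitrary, $\im(\tau)\subseteq\im(\sigma)$, and as $\sigma,\tau$ were arbitrary the reverse inclusion holds too, so all images coincide and $L/K$ is normal.

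The only genuinely delicate point I anticipate is that $L$ is an abstract extension rather than a literal subfield of $\alg{K}$, so the phrases "$f$ splits over $L$" and "the roots of $f$ lie in $\im(\sigma)$" must be mediated carefully through a chosen embedding. The descent of a factorization from $\alg{K}[T]$ back to $L[T]$ rests on the injectivity of $\sigma_0$ on polynomial rings; keeping this bookkeeping honest is the step requiring the most care, while the remainder is a direct unwinding of \cref{univ,extend,auto}.
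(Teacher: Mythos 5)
Your proposal is correct and takes essentially the same approach as the paper: both fix an embedding $\sigma_0 : L \to \alg{K}$ via \cref{extend}, use \cref{univ} and \cref{extend} to identify the roots in $\alg{K}$ of the minimal polynomial of $a \in L$ with the values $\tau(a)$ for $\tau \in \Hom_K(L,\alg{K})$, and translate normality into the image condition $\im(\tau) \subseteq \im(\sigma_0)$ using the remark in \cref{defnormal} together with \cref{auto}. The paper merely compresses your two directions into a single chain of equivalences, leaving implicit the bookkeeping (which you spell out) for transporting the factorization between $L[T]$ and $\im(\sigma_0)[T]$.
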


\begin{proof}
Choose some $K$-homomorphism $\sigma : L \to \alg{K}$. The splitting property means that for every $a \in L$ its minimal polynomial splits completely over $L$. Equivalently, its roots in $\alg{K}$ are all contained in $\im(\sigma)$. By \cref{univ} and \cref{extend} these roots are $\tau(a)$ for $\tau \in \Hom_K(L,\alg{K})$. So the condition is just $\im(\tau) \subseteq \im(\sigma)$ for all $\tau \in \Hom_K(L,\alg{K})$.
\end{proof}


\section{Fundamental theorem of Galois theory}

We are now able to combine the results from the previous sections.

\begin{defi}
A \emph{Galois extension} is a normal separable algebraic extension.
\end{defi}

\begin{rem} \label{galoissup}
Notice that for a Galois extension $L/K$ and an intermediate field $E$ the extension $L/E$ is also Galois by \cref{sepsub,normsub}.
\end{rem}

\begin{thm}\label{order}
Let $L/K$ be a finite Galois extension. Then $\Aut_K(L)$ is a finite group of order $[L:K]$, called the \emph{Galois group} of $L/K$.
\end{thm}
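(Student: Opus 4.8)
The plan is to establish the equality $|\Aut_K(L)| = [L:K]$ by relating both the order of the automorphism group and the set of embeddings into $\alg{K}$. The key observation is that we already have two relevant facts at our disposal: by \cref{counthom}, the set $\Hom_K(L,\alg{K})$ has exactly $[L:K]$ elements because $L/K$ is finite and separable; and by normality (\cref{defnormal}), the group $\Aut_K(L)$ acts transitively on this set. So the strategy is to fix a single $K$-homomorphism $\sigma : L \to \alg{K}$ and use it to transport the counting problem from the group to the embedding set.

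First I would fix some $\sigma \in \Hom_K(L,\alg{K})$, which exists by \cref{extend}. The normality of $L/K$ means precisely that the map $\Aut_K(L) \to \Hom_K(L,\alg{K})$ given by $\varphi \mapsto \sigma \circ \varphi$ is a bijection; this is exactly the reformulation spelled out in \cref{defnormal}, using \cref{auto} to identify $\Aut_K(L)$ with $\Hom_K(L,L)$. I would verify that this map is injective because $\sigma$ is injective (so $\sigma \circ \varphi = \sigma \circ \varphi'$ forces $\varphi = \varphi'$) and surjective because transitivity of the action provides, for every $\tau \in \Hom_K(L,\alg{K})$, some $\varphi$ with $\tau = \sigma \circ \varphi$.

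Once this bijection is in hand, the conclusion is immediate: the order of $\Aut_K(L)$ equals the cardinality of $\Hom_K(L,\alg{K})$, which by \cref{counthom} is exactly $[L:K]$. Since $L/K$ is finite, this number is finite, so $\Aut_K(L)$ is a finite group of the asserted order.

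I do not expect any serious obstacle here, since the heavy lifting was done in the preceding sections. The only point requiring a little care is making sure the identification $\Aut_K(L) = \Hom_K(L,L)$ via \cref{auto} is invoked correctly so that the bijection is genuinely between the automorphism group and the embedding set, rather than between two homomorphism sets; but this is routine bookkeeping. In essence the theorem is just the statement that a transitive action combined with an exact embedding count pins down the group order, so the proof is a direct assembly of \cref{counthom}, \cref{defnormal}, and \cref{auto}.
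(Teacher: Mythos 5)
Your proposal is correct and matches the paper's proof essentially verbatim: both use the bijection $\Aut_K(L) \cong \Hom_K(L,\alg{K})$ coming from normality (as reformulated in \cref{defnormal} via \cref{auto}) and then count $\Hom_K(L,\alg{K})$ by \cref{counthom} using separability. The paper merely states this in one sentence, while you spell out the injectivity/surjectivity of $\varphi \mapsto \sigma \circ \varphi$, which is exactly the content of the remark in \cref{defnormal}.
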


\begin{proof}
Since $L/K$ is normal, we have
\[\Aut_K(L) \cong \Hom_K(L,\alg{K}),\]
and since $L/K$ is separable, $\Hom_K(L,\alg{K})$ has exactly $[L:K]$ elements by \cref{counthom}.
\end{proof}

Let us briefly mention that the proof of \cref{counthom} can actually be used to compute Galois groups in examples.

We are now ready to prove the main theorem.

\begin{thm}[Fundamental theorem of Galois theory]\label{fundamental}
Let $L/K$ be a finite Galois extension.
\begin{enumerate}
\item\label{correspondence} The maps $E \mapsto \Aut_E(L)$ and $L^H \mapsfrom H$ are inverse to each other and hence establish a bijection
\[\bigl\{\text{intermediate fields of } L/K\bigr\} \cong \bigl\{\text{subgroups of } \Aut_K(L)\bigr\}.\]
\item These maps are inclusion-reversing in the sense
\begin{itemize}
\item $E \subseteq E' \implies \Aut_{E'}(L) \subseteq \Aut_E(L)$
\item $H \subseteq H' \implies L^{H'} \subseteq L^H$
\end{itemize}
and hence provide an anti-isomorphism of partial orders.
\item The degree of an intermediate field $E$ is the index of the corresponding subgroup:
\[[E:K] = [\Aut_K(L) : \Aut_E(L)]\]
\item For intermediate fields $E,E'$ and subgroups $H,H'$ the following relationships hold:
\begin{itemize}
\item $\Aut_{E \cap E'}(L) = \langle \Aut_E(L),\Aut_{E'}(L) \rangle$
\item $\Aut_{E \cdot E'}(L) = \Aut_E(L) \cap \Aut_{E'}(L)$
\item $L^{H \cap H'} = L^H \cdot L^{H'}$
\item $L^{\langle H,H' \rangle} = L^H \cap L^{H'}$
\end{itemize}
Here, $E \cdot E'$ denotes the compositum of $E$ and $E'$.
\item For an intermediate field $E$ the extension $E/K$ is normal (and hence a Galois extension) if and only if $\Aut_E(L)$ is a normal subgroup of $\Aut_K(L)$. In this case, we have
\[\Aut_K(L) / \Aut_E(L) \cong \Aut_K(E).\]
\item The bijection from (1) restricts to a bijection
\[\bigl\{\text{normal intermediate fields of } L/K\bigr\} \cong \bigl\{\text{normal subgroups of } \Aut_K(L)\bigr\}.\]
\end{enumerate}
\end{thm}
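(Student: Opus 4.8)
The plan is to obtain parts (1)--(4) quickly from the two classification results already established, and to concentrate the real effort on part (5), from which (6) is immediate.

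\emph{Parts (1)--(3).} For part (1), the identity $L^{\Aut_E(L)} = E$ is exactly \cref{fix} (applicable since $L/K$ is normal and separable), while $\Aut_{L^H}(L) = H$ is \cref{subgroupclass}; the finiteness hypothesis on $H$ there is automatic because $\Aut_K(L)$ is finite by \cref{order}. Thus the two maps are mutually inverse. Part (2) is immediate from the definitions -- enlarging an intermediate field shrinks the group of automorphisms fixing it, and enlarging a subgroup shrinks its fixed field -- and together with (1) this makes the correspondence an anti-isomorphism of posets. For part (3), I would apply \cref{order} twice, to $L/K$ and to $L/E$ (which is Galois by \cref{galoissup}), obtaining $|\Aut_K(L)| = [L:K]$ and $|\Aut_E(L)| = [L:E]$, and then divide using multiplicativity of degrees: $[\Aut_K(L):\Aut_E(L)] = [L:K]/[L:E] = [E:K]$.

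\emph{Part (4).} Here I would compute nothing directly. The intermediate fields of $L/K$ form a lattice under $\cap$ and compositum, and the subgroups of $\Aut_K(L)$ form a lattice under $\cap$ and $\langle -,-\rangle$. Since (1) and (2) exhibit the correspondence as an order anti-isomorphism between these lattices, it automatically interchanges meets and joins, and the four identities are just the four ways of reading this off.

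\emph{Part (5), the crux.} This is where the main obstacle lies. The first step is the conjugation formula
\[\Aut_{\varphi(E)}(L) = \varphi\,\Aut_E(L)\,\varphi^{-1} \qquad (\varphi \in \Aut_K(L)),\]
which I would check directly: $\psi$ fixes $\varphi(E)$ pointwise iff $\varphi^{-1}\psi\varphi$ fixes $E$ pointwise. Combined with the bijectivity of the correspondence, this already shows that $\Aut_E(L)$ is a normal subgroup iff $\varphi(E)=E$ for every $\varphi \in \Aut_K(L)$. The real work is to match the latter condition with normality of $E/K$. To this end I would fix an embedding and identify $L$ with a subfield of $\alg{K}$; the delicate point, and the one I expect to be hardest to phrase correctly, is that normality of $L/K$ forces \emph{every} $K$-homomorphism $L \to \alg{K}$ to have image $L$ and hence to be an element of $\Aut_K(L)$. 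Granting this, \cref{extend} shows that each $\rho \in \Hom_K(E,\alg{K})$ is a restriction $\varphi|_E$ of some $\varphi \in \Aut_K(L)$ with $\im(\rho)=\varphi(E)$, and by the reformulation of normality given after \cref{defnormal} (all such images coincide), $E/K$ is normal precisely when $\varphi(E)=E$ for all $\varphi$. This closes the equivalence; everything outside the image claim is bookkeeping.

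\emph{The isomorphism and part (6).} Assuming $E/K$ normal, every $\varphi \in \Aut_K(L)$ satisfies $\varphi(E)=E$ and so restricts to $\varphi|_E \in \Aut_K(E)$, giving a homomorphism $\Aut_K(L) \to \Aut_K(E)$, $\varphi \mapsto \varphi|_E$, whose kernel is $\Aut_E(L)$ by the very definition of the fixing subgroup. For surjectivity I would extend any $\rho \in \Aut_K(E)$ to $L$ via \cref{extend} and use normality of $L/K$ (again the image claim above) to see the extension is an automorphism of $L$ restricting to $\rho$. The first isomorphism theorem then gives $\Aut_K(L)/\Aut_E(L) \cong \Aut_K(E)$. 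Finally, part (6) is simply the statement that the bijection of (1) carries normal intermediate fields to normal subgroups and back, which is exactly the equivalence proved in (5).
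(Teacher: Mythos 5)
Your proposal is correct, and for parts (1)--(4) and (6) it coincides with the paper's own proof (the only cosmetic difference being that you deduce finiteness of $H$ from \cref{order} where the paper cites \cref{atmost}; both work). The genuine divergence is in part (5), and your route is a legitimately different one. The paper argues the two implications by separate techniques: for the forward direction it considers the restriction map $\Hom_K(L,\alg{K}) \to \Hom_K(E,\alg{K})$, which is surjective by \cref{extend} and which, since both $L/K$ and $E/K$ are normal, identifies with a group homomorphism $\Aut_K(L) \to \Aut_K(E)$ with kernel $\Aut_E(L)$ -- this delivers normality of the subgroup \emph{and} the isomorphism $\Aut_K(L)/\Aut_E(L) \cong \Aut_K(E)$ in a single stroke; for the backward direction it takes an arbitrary normal subgroup $H$ and verifies directly that $\sigma(L^H) \subseteq \tau(L^H)$ for any two embeddings of $L^H$, extending them to $L$ and using the conjugation $\varphi^{-1}\psi\varphi \in H$ to get $\varphi(L^H) \subseteq L^H$. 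You instead funnel both implications through a single stability criterion: $\Aut_E(L)$ is normal in $\Aut_K(L)$ iff $\varphi(E)=E$ for all $\varphi \in \Aut_K(L)$ (via your conjugation formula $\Aut_{\varphi(E)}(L) = \varphi \Aut_E(L) \varphi^{-1}$ together with injectivity of the correspondence from (1)), and $E/K$ is normal iff the same stability holds. This makes the useful slogan ``normal over $K$ $=$ stable under $\Aut_K(L)$'' explicit and treats the equivalence symmetrically, at the cost of leaning harder on part (1) and of having to prove the quotient isomorphism as a separate step (your restriction homomorphism plus the first isomorphism theorem) rather than getting it for free from the kernel computation. Finally, the claim you flag as delicate -- that normality of $L/K$ forces every $K$-homomorphism $\tau : L \to \alg{K}$ to have image $L$ -- is in fact immediate from \cref{defnormal}: writing $\tau = \iota \circ \varphi$ with $\varphi \in \Aut_K(L)$ and $\iota$ the fixed embedding gives $\im(\tau) = \iota(L)$, so there is no gap there, only a one-line verification to insert.
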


\begin{proof}
(1) For an intermediate field $E$ of $L/K$ we have $E = L^{\Aut_E(L)}$ by \cref{fix}. For a subgroup $H$ of $\Aut_K(L)$ we have $H = \Aut_{L^H}(L)$ by \cref{subgroupclass}, which is applicable since $H$ is finite by \cref{atmost}.

(2) The verification of these inclusions is trivial.

(3) By \cref{galoissup,order} we have
\[[\Aut_K(L):\Aut_E(L)] = \ord(\Aut_K(L)) / \ord(\Aut_E(L)) = [L:K] / [L:E] = [E:K].\]

(4) This follows from (2) as follows: The usual definition of a supremum as the least upper bound works in every partial order. Similarly for an infimum.
In the partial order of subgroups of a group we have $\sup(H,H') = \langle H,H' \rangle$ and $\inf(H,H') = H \cap H'$.
In the partial order of intermediate fields of an extension we have $\sup(E,E') = E \cdot E'$ as well as $\inf(E,E') = E \cap E'$.
Now we may use the general and easy fact that an anti-isomorphism of partial orders transforms suprema into infima and vice versa.

(5) Let $E/K$ be normal. Fix a $K$-homomorphism $L \to \alg{K}$. Because of \cref{extend} the restriction map $\Hom_K(L,\alg{K}) \to \Hom_K(E,\alg{K})$ is surjective, and since $L/K$ and $E/K$ are normal, it identifies with a restriction map $\Aut_K(L) \to \Aut_K(E)$. This is clearly a homomorphism of groups whose kernel is $\Aut_E(L)$. Thus, $\Aut_E(L)$ is a normal subgroup with $\Aut_K(L) / \Aut_E(L) \cong \Aut_K(E)$.

For the other direction assume that $H$ is a normal subgroup of $\Aut_K(L)$. To prove that $L^H$ is normal over $K$, choose two $K$-homomorphisms $\sigma,\tau : L^H \to \alg{K}$. By \cref{extend} there are extensions $\sigma',\tau'$ to $L$. Since $L/K$ is normal, we have $\sigma' = \tau' \circ \varphi$ for some $\varphi : L \to L$. We claim $\varphi(L^H) \subseteq L^H$. In fact, for every $a \in L^H$ and $\psi \in H$ we have $\varphi^{-1} \psi \varphi \in H$ (since $H$ is normal), hence $(\varphi^{-1} \psi \varphi)(a)=a$, i.e.\ $\psi(\varphi(a)) = \varphi(a)$. Now, from $\varphi(L^H) \subseteq L^H$ we deduce
\[\sigma(L^H) = \sigma'(L^H) = \tau'(\varphi(L^H)) \subseteq \tau'(L^H) = \tau(L^H).\]

(6) This follows from (1) and (5).
\end{proof}

\begin{rem}
Both \cref{order} and \cref{fundamental}(\ref{correspondence}) actually characterize Galois extensions by \cite[Thm.\ VII.6.9]{Aluffi}.
\end{rem}


\end{document}